\documentclass[11pt]{amsart}

\usepackage{amssymb}
\usepackage{overpic}
\usepackage{enumitem}   
\usepackage{graphicx} 
\usepackage{multicol}
\usepackage{xcolor}
\usepackage{amsrefs}
\usepackage[a4paper,margin=1.5cm]{geometry}
\usepackage[hidelinks]{hyperref}

\graphicspath{{Figures/}}

\newtheorem{theorem}{Theorem}
\newtheorem{question}{Question}
\newtheorem{proposition}{Proposition} 
\newtheorem{corollary}{Corollary} 
\newtheorem{remark}{Remark} 
 
\newtheorem{definition}{Definition}

\setlength{\parskip}{0.2cm} 

\begin{document}
	
\title[Non-hyperbolic limit cycles on planar polynomial vector fields]{On the structural instability of non-hyperbolic limit cycles on planar polynomial vector fields}

\author[Paulo Santana]
{Paulo Santana$^1$}

\dedicatory{To Jorge Sotomayor, in memorian.}

\address{$^1$ IBILCE-UNESP, CEP 15054--000, S\~ao Jos\'e do Rio Preto, S\~ao Paulo, Brazil}
\email{paulo.santana@unesp.br}

\subjclass[2020]{34A34; 34C07; 34C25; 37G15}

\keywords{Structural Stability; Limit cycles; Polynomial vector fields}

\begin{abstract}
	It is known that non-hyperbolic limit cycles are structurally unstable in the set of planar smooth and analytical vector fields. In the polynomial case, it is known only that limit cycles of even degree are structurally unstable. In this paper, we prove that non-hyperbolic limit cycles of odd degree are also structurally unstable in the polynomial case, if we consider Whitney's topology.
\end{abstract}

\maketitle

\section{Introduction}

Roughly speaking, a vector field $X$ is structurally stable if small perturbations does not change the topological character of its orbits. Let $B^2=\{x\in\mathbb{R}^2\colon ||x||\leqslant1\}$ denote the closed unit disk and let $\partial B^2$ denote its topological boundary. The notion of \emph{structural stability} (first known as \emph{robustness}) is due to Andronov and Pontrjagin \cite{AndPon1937}, whose in 1937 enunciated (without proofs) sufficient and necessary conditions for an analytical vector field $X$, traversal to $\partial B^2$, to be structurally stable in $B^2$. In 1952 DeBaggis \cite{Bag1952} provided the omitted proofs under the less restrictive hypothesis of $X$ being of class $C^1$. On January of 1959 M. M. Peixoto \cite{Pei1959} provided an equivalent definition of structural stability and extend some of the previous results to $B^n=\{x\in\mathbb{R}^n\colon ||x||\leqslant1\}$. On June of 1959 M. M. Peixoto and his wife, M. C. Peixoto \cite{PeiPei1959}, extended the notion of structural stability to vector fields of class $C^1$ defined on a two-dimensional manifold $M\subset\mathbb{R}^2$ with boundary and corners, allowing contact between $X$ and $\partial M$. In 1962 M. M. Peixoto \cite{Pei1962} (from now on referred only as Peixoto) provided sufficient and necessary conditions for structural stability on vector fields of class $C^1$ defined on a two-dimensional closed manifold $M$ (i.e. compact and without boundary). Such characterization is known as \emph{Peixoto's Theorem}. In 1973 Peixoto \cite{Pei1973} also derived a relation between structural stability and Graph Theory. Given a two-dimensional closed manifold $M$, let $\mathfrak{X}$ be the family of all $C^1$-vector fields defined over $M$. Let also $\Sigma\subset\mathfrak{X}$ be the set of all the structural stable vector fields over $M$ and denote $\mathfrak{X}_1=\mathfrak{X}\backslash\Sigma$. In 1974 Sotomayor \cite{Soto1974} provided the complete characterization of all the structurally stable vector fields of $\mathfrak{X}_1$, giving rise to the notion of \emph{structural stability of first order} (or vector fields of \emph{codimension one}). In 1977 Teixeira \cite{Tei1977} extended the work of Sotomayor, allowing $M$ to be a manifold with boundary. From there on, the notion of structural stability flourished in many ways, with the characterization of structural stability of $C^1$-vector fields on open surfaces \cite{Kot1982}, of polynomial vector fields on compact \cites{Soto1985,Cai1979,San1977,Vel} and non-compact \cites{Sha1987,DumSha1990} two dimensional manifolds, $C^1$-vector fields on compact $n$-dimensional manifolds \cite{Mar1961}, gradient flows \cite{Sha1990}, polynomial foliations \cite{JarLliSha2005}, Game Theory \cite{BasBuzSan2024}, quasihomogenous vector fields \cites{OliZha2014,AlgFueGamGar2018}, reversible vector fields \cite{BuzRobTei2021} and piecewise smooth vector fields \cites{BuzRodTei2022,PesSot2012}. Moreover, there is a great effort of Art\'es and coauthors for the classification of the quadratic vector fields with low codimension. For the classification of quadratic vector fields of codimension zero and one we refer to \cite{ArtKooLli} and \cite{ArtRezLli}. For the ongoing classification of those with codimension two, we refer to \cites{ArtOliRez,ArtMotRez,Art2024}. For the history of Peixoto and the develop of the theory of structural stability, we refer to \cite{SotGarMel2020,Sot2020}. In this paper, we work on the structural instability of non-hyperbolic limit cycles in the set of planar polynomial vector fields.

\section{Statement and discussion of the main results}

As stated in this previous section, the notion of \emph{robustness} of a vector field was first provided by Andronov and Pontrjagin \cite{AndPon1937}. Let $B=\{x\in\mathbb{R}^2\colon ||x||\leqslant1\}$, where $||\cdot||$ denotes the standard norm of $\mathbb{R}^2$, and let $\mathfrak{X}$ be the set of $C^1$-vector fields defined over $B$ and without contact with $\partial B^2$, where $\partial B^2$ denotes the topological boundary of $B^2$. Let also $\mathfrak{X}$ be endowed with the $C^1$-topology (i.e. to vector fields are close if their components and its first order partial derivatives are close).

\begin{definition}[Robustness in the sense of Andronov and Pontrjagin]
	Let $X\in\mathfrak{X}$. We say that $X$ is \emph{robust} if for every $\varepsilon>0$ there is a neighborhood $N\subset\mathfrak{X}$ of $X$ and a family of homeomorphisms $h\colon N\to\text{Hom}(B^2,B^2)$, such that for every $Y\in N$ the homeomorphism $h_Y\colon B^2\to B^2$ sends orbits of $X$ to orbits of $Y$, either preserving of reversing the direction of all orbits, and such that the following statements hold.
	\begin{enumerate}[label=(\alph*)]
		\item $h_X=\text{Id}_{B^2}$;
		\item For every $Y\in N$ and $x\in B^2$, $||h_Y(x)-x||<\varepsilon$.
	\end{enumerate}
\end{definition}

One of the first and main contributions of Peixoto \cite{Pei1959} was to prove the above statements $(a)$ and $(b)$ are redundant (even in the case of analytical vector fields). That is, the homeomorphism $h_Y$ does not need to be in a $\varepsilon$-neighborhood of the identity map. Hence, Peixoto provided a more general, and yet equivalent, notion of robustness. He called such notion \emph{structural stability}.

\begin{definition}[Structural stability in the sense of Peixoto]
	Let $X\in\mathfrak{X}$. We say that $X$ is \emph{structural stable} if there is a neighborhood $N\subset\mathfrak{X}$ of $X$ such that for each $Y\in N$ there is an homeomorphism $h_Y\colon B^2\to B^2$ sending orbits of $X$ to orbits of $Y$, either preserving of reversing the direction of all orbits.
\end{definition}

After that, Peixoto also extended his work by replacing $B^2$ for a two-dimensional manifold $M\subset\mathbb{R}^2$ with boundary and conners \cite{PeiPei1959} and then extended it again allowing $M$ to be any two-dimensional closed manifold \cite{Pei1962}. Similar results also hold for the case in which $M\subset\mathbb{R}^2$ is an open surface (e.g. $M=\mathbb{R}^2$). See \cite{Kot1982}. However, when dealing with structural stability in \emph{polynomial vector fields} of some maximum degree $n$, things become more difficult to tackle. This is the case because to proof that some object \emph{is not} structurally stable (e.g. a non-hyperbolic limit cycle) one need to \emph{construct} some suitable perturbation that ``breaks'' this object (e.g. slip the non-hyperbolic limit cycle in two or more limit cycles). Therefore, when working with a more restrict space of vector fields, one may obtain a more broader set of structurally stable objects. Concerns about this go back to Aldronov et al \cite[$\mathsection6.3$]{And1971}. Still, there are great approaches of the structural stability of polynomial vector fields. For this, we refer the works of Sotomayor \cite{Soto1985} and Shafer \cite{Sha1987}. On one hand, Sotomayor \cite{Soto1985} defined the structural stability of a planar polynomial vector field $X$ as the structural stability of its Poincar\'e compactification $p(X)$ (see \cite[Chapter $5$]{DumLliArt2006}) and endowed the space of vector fields with the coefficients topology. On the other hand, Shafer \cite{Sha1987} approachs $X$ as a vector field defined on the open surface $M=\mathbb{R}^2$ and endow the space of vector fields with either the Whitney's $C^r$-topology, $r\geqslant 1$, or the coefficients topology. However, in the case of coefficients topology, there is an open question that kept both Sotomayor and Shafer from obtaining necessary \emph{and} sufficient conditions for structural stability.

\begin{question}\label{Q1}
	Let $\mathcal{X}_n$ be the set of the planar polynomial vector fields of degree at most $n$, endowed with the coefficients topology. If $X\in\mathcal{X}_n$ has a non-hyperbolic limit cycle of odd degree, then is $X$ structurally unstable in $\mathcal{X}_n$?
\end{question}

Here we recall that the degree of a limit cycle (also known as \emph{multiplicity}) is the order of the first non-zero derivative of its displacement map. Question~\ref{Q1} was explicitly raised by both Sotomayor \cite[Problem $1.1$]{Soto1985} and Shafer \cite[Question $3.4$]{Sha1987}. In the case of non-hyperbolic limit cycles of even degree, the structural instability follows from the theory of rotated vector fields. More precisely, if $X=(P,Q)$ has an non-hyperbolic limit cycle $\gamma$ of even degree, then consider $Y_\lambda=X+\lambda X^\perp$, where $X^\perp=(-Q,P)$. It follows from \cite[Theorem $2$, p. $387$]{Perko2001} that for $|\lambda|>0$ small enough, $\gamma$ had either vanished or slip in two or more limit cycles. In the case of smooth and analytical vector fields it is well known that non-hyperbolic limit cycles (in particular, those with odd degree) are structurally unstable. Briefly, the proof work as follows. Let $X=(P,Q)$ be a planar vector field of class $C^r$, $r\geqslant 1$, and let $\gamma(t)$ be a limit cycle of $X$, with period $T>0$. Andronov et al \cite[p. 124]{And1971}, proved that there is a neighborhood $G\subset\mathbb{R}^2$ of $\gamma$ and a function $F\colon G\to\mathbb{R}$ of class $C^{r+1}$, such that 
	\[F(\gamma(t))=0, \quad \frac{\partial F}{\partial x}(\gamma(t))^2+\frac{\partial F}{\partial y}(\gamma(t))^2>0,\]
for all $t\in[0,T]$. Peixoto \cite[Lemma~$6$]{Pei1959}, proved that if $X$ is analytical, then $F$ is also analytical. With such function, we consider the vector field $Y_\lambda=(R_\lambda,S_\lambda)$ given by
	\[R_\lambda(x,y)=P(x,y)+\lambda F(x,y)\frac{\partial F}{\partial x}(x,y), \quad S_\lambda(x,y)=Q(x,y)+\lambda F(x,y)\frac{\partial F}{\partial y}(x,y),\]
with $\lambda\in\mathbb{R}$. By using the Poincar\'e-Bendixson Theorem, one can prove that for $|\lambda|>0$ small enough, $\gamma$ had split in at least two limit cycles. For more details, see \cite[$\mathsection15.2$]{And1971} and \cite[Lemma~$6$]{Pei1959}. Observe that if $F$ is polynomial, then $\gamma$ is an algebraic limit cycle. Since not every limit cycle is algebraic (see \cite{GasGiaTor2007}), it follows that even if $X$ is polynomial, $F$ may not be. Hence, for the polynomial case the usual tools breaks down. We now state our main results concretely. Given $r\geqslant1$ \emph{finite}, let $C^r(\mathbb{R}^2,\mathbb{R}^2)$ be the set of the functions $f\colon\mathbb{R}^2\to\mathbb{R}^2$ of class $C^r$. Given $f\in C^r(\mathbb{R}^2,\mathbb{R}^2)$, a compact set $K\subset\mathbb{R}^2$, an open set $U\subset\mathbb{R}^2$ and $\varepsilon>0$, let $V(f,K,U,\varepsilon)\subset C^r(\mathbb{R}^2,\mathbb{R}^2)$ be the set of functions $C^r$-functions $g\colon\mathbb{R}^2\to\mathbb{R}^2$ such that $g(K)\subset U$ and 
	\[\max_{\substack{(x,y)\in K \\ |k|\leqslant r}}\left|\left|\frac{\partial^{|k|}f}{\partial x^{k_1}\partial y^{k_2}}(x,y)-\frac{\partial^{|k|}g}{\partial x^{k_1}\partial y^{k_2}}(x,y)\right|\right|<\varepsilon,\]
where $k=(k_1,k_2)\in\mathbb{Z}^2_{\geqslant0}$ and $|k|=k_1+k_2$. The Whitney's weak $C^r$-topology \cite{Hirsch} is the topology on $C^r(\mathbb{R}^2,\mathbb{R}^2)$ having all such $V(f,K,U,\varepsilon)$ as a sub-base. In other words, it is the smaller topology that contains all such $V(f,K,U,\varepsilon)$. Let now $\mathfrak{X}$ be the set of all the planar polynomial vector fields \emph{of any degree}. Since $\mathfrak{X}\subset C^r(\mathbb{R}^2,\mathbb{R}^2)$, we can endow $\mathfrak{X}$ with the \emph{subspace topology} $\tau_r$, inherited from Whitney's weak $C^r$-topology. Let $\mathfrak{X}^r=(\mathfrak{X},\tau_r)$. Observe that two vector fields $X$, $Y\in\mathfrak{X}^r$ are close if there is a ``big'' compact $K\subset\mathbb{R}^2$ and a small $\varepsilon>0$ such that 
	\[\max_{\substack{(x,y)\in K \\ |k|\leqslant r}}\left|\left|\frac{\partial^{|k|}X}{\partial x^{k_1}\partial y^{k_2}}(x,y)-\frac{\partial^{|k|}Y}{\partial x^{k_1}\partial y^{k_2}}(x,y)\right|\right|<\varepsilon.\]
Our first main result is the following.

\begin{theorem}\label{T1}
	Let $X$ be a planar polynomial vector field with a non-hyperbolic limit cycle $\gamma$ of odd degree. Then $X$ is structurally unstable in $\mathfrak{X}^r$, for any $r\geqslant1$ finite. 
\end{theorem}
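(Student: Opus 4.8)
The plan is to adapt the classical Andronov--Peixoto construction to the polynomial world, exploiting the fact that Whitney's weak topology only measures closeness on compact sets. Recall from the discussion above that there is a neighborhood $G$ of $\gamma$ and a function $F\colon G\to\mathbb{R}$ of class $C^{r+1}$ with $F|_\gamma\equiv 0$ and $\nabla F\neq 0$ along $\gamma$. The perturbation $X+\lambda F\nabla F$ has two virtues: since $F$ vanishes on $\gamma$, the field $F\nabla F$ vanishes on $\gamma$ as well, so $\gamma$ remains a periodic orbit; and since $\operatorname{div}(F\nabla F)=|\nabla F|^2+F\Delta F$ equals $|\nabla F|^2>0$ on $\gamma$, the characteristic multiplier of $\gamma$ changes from $1$ to $\exp\bigl(\lambda\int_0^T|\nabla F(\gamma(t))|^2\,dt\bigr)$. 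Writing the displacement map as $d(s)=a s^m+O(s^{m+1})$ with $m$ odd and $a\neq 0$ (so $m\geq 3$, since $\gamma$ is non-hyperbolic), the perturbed displacement map becomes $d_\lambda(s)=a s^m+\lambda b\,s+\cdots$ with $b=\int_0^T|\nabla F(\gamma(t))|^2\,dt>0$. Choosing $\operatorname{sign}\lambda=-\operatorname{sign}a$ and solving $s(a s^{m-1}+\lambda b)=0$ produces, because $m-1$ is even, two new simple zeros $\pm(-\lambda b/a)^{1/(m-1)}$ besides $s=0$; thus in the smooth category $\gamma$ splits into three limit cycles.

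First I would replace $F$ by a polynomial. By the Stone--Weierstrass theorem, on a compact annular neighborhood $A\subset G$ of $\gamma$ there is a polynomial $\tilde F$ with $\|\tilde F-F\|_{C^{r+1}(A)}<\delta$; in particular $\nabla\tilde F\neq 0$ along $\gamma$ and $Y_\lambda:=X+\lambda\tilde F\nabla\tilde F$ is a genuine polynomial vector field. Because Whitney's weak $C^r$-topology (and hence $\tau_r$) controls the fields only on a prescribed compact set, $Y_\lambda$ lies in any prescribed basic neighborhood $V(X,K,U,\varepsilon)\cap\mathfrak{X}$ of $X$ once $\lambda$ is small enough relative to $\sup_K\|\tilde F\nabla\tilde F\|_{C^r}$. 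This is exactly where working in $\mathfrak{X}$, the polynomial fields of \emph{any} degree under the weak topology, is essential: we are free to take $\tilde F$ of arbitrarily high degree and to measure smallness only on a compact set, whereas in the fixed-degree space $\mathcal{X}_n$ of Question~\ref{Q1} the perturbation is confined to degree $n$, which is why the analogous question there remains open.

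Next I would redo the displacement-map computation for $Y_\lambda$. Since $\tilde F$ no longer vanishes identically on $\gamma$, the perturbation does not preserve $\gamma$ and introduces a spurious constant term: one gets $d_\lambda(s)=a s^m+\lambda b\,s+\lambda c_0+\cdots$, where $b$ is still bounded below by a positive constant (as $\nabla\tilde F\approx\nabla F$ on $\gamma$) while $|c_0|=O(\delta)$, coming from $\tilde F|_\gamma=O(\delta)$. The quantitative key is to order the choices correctly: given the target neighborhood I would first fix $\lambda$ small with the destabilizing sign, and only then choose $\delta$ much smaller than $\sqrt{|\lambda|}$, so that the polynomial $a s^m+\lambda b\,s+\lambda c_0$ still has its three simple real roots near $s=0$; these then persist for the full displacement map by the implicit function theorem. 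This yields three hyperbolic limit cycles of $Y_\lambda$ inside $A$.

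Finally, since $X$ has a single periodic orbit in $A$ while $Y_\lambda$ has three, one concludes that $X$ and $Y_\lambda$ are not topologically equivalent, so $X$ is structurally unstable in $\mathfrak{X}^r$. I expect the main obstacle to be precisely this last step made rigorous, rather than the construction itself. The count of periodic orbits in the fixed annulus $A$ is not, by itself, preserved under a \emph{global} topological equivalence, and because a polynomial perturbation necessarily alters the field near infinity it may also create or destroy limit cycles outside $K$; hence the clean invariant is the total number of limit cycles, and I would need to show that for $\lambda$ small the only net effect is the birth of the two cycles near $\gamma$, ruling out compensating changes elsewhere. Controlling this global count, together with the careful interlocking of the parameters $\lambda$ and $\delta$ in the return-map estimate, is where the real work lies; the remaining ingredients---Stone--Weierstrass, the multiplier computation, and the bifurcation of the odd-order zero---are routine.
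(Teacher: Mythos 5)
Your strategy is essentially the paper's: approximate Andronov's function $F$ by a polynomial in the $C^{r+1}$ norm on a compact set (the paper uses Bernstein polynomials, you invoke Stone--Weierstrass), perturb by $\lambda\tilde F\nabla\tilde F$, observe that Whitney's weak topology together with the absence of a degree bound makes this perturbation admissible, and fix $\lambda$ \emph{before} the approximation quality, so that the destabilizing term $\lambda\int|\nabla\tilde F|^2$ dominates the error coming from $\tilde F|_\gamma\neq0$. The proofs differ in how the three cycles are extracted: you perturb the displacement map and track the roots of $as^m+\lambda bs+\lambda c_0$ quantitatively (your condition $\delta\ll\sqrt{|\lambda|}$), whereas the paper proves that the continued orbit $\gamma_{\lambda,n}$ is a hyperbolic \emph{repelling} cycle via a divergence-integral limit argument and then obtains the two companion cycles from Poincar\'e--Bendixson inside a trapping annulus $\Omega$ (Proposition~\ref{P1}) into which every nearby field points; this sidesteps your delicate root-perturbation estimates. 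Your closing worry about global counting is not resolved by the paper either: its final inequivalence step is equally terse, resting on the fact that $X$ has one periodic orbit in $\Omega$ while every admissible perturbation traps at least three there.

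The genuine gap is a circularity in your choice of parameters. You approximate $F$ only on a compact \emph{annular} neighborhood $A\subset G$ of $\gamma$, and you correctly record that $Y_\lambda\in V(X,K,U,\varepsilon)$ requires $\lambda$ to be small relative to $\sup_K\|\tilde F\nabla\tilde F\|_{C^r}$, where $K$ is the compact set prescribed by the given neighborhood and may be enormously larger than $A$. But you then fix $\lambda$ \emph{first} and shrink $\delta$ afterwards. Shrinking $\delta$ means replacing $\tilde F$ by a new polynomial, generally of higher degree, over which you have no control on $K\setminus A$: nothing bounds $\sup_K\|\tilde F_\delta\nabla\tilde F_\delta\|_{C^r}$ as $\delta\to0$, so the previously fixed $\lambda$ may eject $Y_\lambda$ from the neighborhood. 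Reversing the order does not help, since the root count needs $\delta\ll\sqrt{|\lambda|}$, i.e.\ $\delta$ small in terms of $\lambda$. The paper's way out is precisely the step you omitted: extend $F$ to all of $\mathbb{R}^2$ by bump functions \emph{before} approximating, and run the approximation on a closed ball $B$ containing $K$ as well as $\gamma$. Then $\sup_K\|\tilde F_\delta\nabla\tilde F_\delta\|_{C^r}\leqslant\|F\nabla F\|_{C^r(B)}+1$ uniformly for all sufficiently good approximations, the admissible range of $\lambda$ no longer depends on $\delta$, and ``fix $\lambda$, then refine the approximation'' becomes legitimate. With that repair (and the approximation region enlarged from $A$ to such a ball $B$), your construction matches the paper's and goes through.
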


In simple words, our main result is that non-hyperbolic limit cycles of odd degree are structurally unstable in relation to \emph{polynomials} perturbation on the Whitney's topology. We observe that the author in \cite{Sha1987} already worked with the Whitney's topology, however his perturbations are not polynomial. We observe that in this paper we do not study the codimension of the bifurcation of non-hyperbolic odd limit cycles in $\mathfrak{X}^r$. Rather, we only prove that \emph{there is} a polynomial perturbation. Which is, to the best of our knowledge, a new fact. 

We now state our second main result. Given $n\in\mathbb{N}$, let $\mathcal{X}_n$ be the space of planar polynomial vector fields of degree at most $n$, endowed with the coefficients topology. In this paper we also provide a different proof of the following already-known fact.

\begin{theorem}\label{T2}
	Let $X$ be a planar polynomial vector field with a non-hyperbolic limit cycle of even degree. Then $X$ is structurally unstable in $\mathcal{X}_n$.
\end{theorem}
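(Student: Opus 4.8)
The plan is to exploit the one-parameter family of rotated vector fields generated by the orthogonal perturbation already singled out in the introduction. Writing $X=(P,Q)$, set
\[Y_\lambda = X + \lambda X^\perp = (P-\lambda Q,\, Q+\lambda P), \qquad \lambda\in\mathbb{R}.\]
The decisive feature for working inside $\mathcal{X}_n$ is that $X^\perp=(-Q,P)$ is again polynomial of degree at most $n$, so every $Y_\lambda$ lies in $\mathcal{X}_n$ and $Y_\lambda\to X$ in the coefficients topology as $\lambda\to0$ (each coefficient of $Y_\lambda$ is the corresponding coefficient of $X$ plus $\lambda$ times a coefficient of $X^\perp$). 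Hence it suffices to produce, for arbitrarily small $|\lambda|>0$, a field $Y_\lambda$ not topologically equivalent to $X$; the instability in $\mathcal{X}_n$ then follows immediately.

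First I would check that $\{Y_\lambda\}$ is genuinely a (complete) family of rotated vector fields, so that the cited theorem of Perko \cite{Perko2001} applies. Two points must be verified. The singular set is preserved: $Y_\lambda=0$ forces $P-\lambda Q=0$ and $Q+\lambda P=0$, whence $(1+\lambda^2)Q=0$ and therefore $P=Q=0$, so the zeros of $Y_\lambda$ coincide with those of $X$ for every $\lambda$. The rotation condition holds: at any point with $X\neq0$,
\[P_\lambda\,\frac{\partial Q_\lambda}{\partial\lambda} - Q_\lambda\,\frac{\partial P_\lambda}{\partial\lambda} = (P-\lambda Q)P - (Q+\lambda P)(-Q) = P^2+Q^2 > 0,\]
where $P_\lambda=P-\lambda Q$ and $Q_\lambda=Q+\lambda P$. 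Thus the field rotates with constant positive sense away from its singularities, which is exactly the hypothesis defining a family of rotated vector fields.

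Next I would feed the even multiplicity of $\gamma$ into the monotonicity furnished by this structure. Fix a transversal section to $\gamma$ and let $d_\lambda$ denote the displacement (return) map of $Y_\lambda$ on it; since $X$ is polynomial, $d_0$ is analytic and, because $\gamma$ has finite even degree, it has a zero of even order at the point $\rho_0$ corresponding to $\gamma$. In particular $\rho_0$ is isolated and $d_0$ does not change sign there, say $d_0\geqslant0$ near $\rho_0$. The rotated-field structure makes $d_\lambda$ strictly monotone in $\lambda$: the sign of $\partial d_\lambda/\partial\lambda$ is constant, being controlled by the integral of the wedge product $X\wedge X^\perp=P^2+Q^2$ along the orbit, which is strictly positive. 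Consequently, for small $\lambda$ of one sign $d_\lambda$ stays positive and $\gamma$ disappears, while for small $\lambda$ of the opposite sign $d_\lambda$ dips below zero and the even-order zero breaks into at least two sign changes; that is, $Y_\lambda$ has at least two limit cycles in a thin annulus $A$ around $\gamma$ in which $X$ had the single periodic orbit $\gamma$.

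Finally I would conclude by a topological invariance argument: a homeomorphism realizing an equivalence between the Poincar\'e compactifications of $X$ and $Y_\lambda$ must carry limit cycles to limit cycles and preserve their number, so the change in the count of limit cycles near $\gamma$ rules out equivalence. The main obstacle, and the only place demanding care, is this last step: one must guarantee that $\gamma$ is isolated among periodic orbits and select the annulus $A$ meeting no other periodic orbit of $X$ (here the analyticity of $d_0$ is what forbids an accumulation of cycles), and one must phrase the invariance of the limit-cycle count so that it is insensitive to the behaviour far from $\gamma$. I would also stress the contrast with the odd case: for odd degree $d_0$ changes sign at $\rho_0$, the monotone shift merely translates the unique zero, and no splitting occurs, which is precisely why rotation alone is powerless for Theorem~\ref{T1} and a genuinely different construction is needed there.
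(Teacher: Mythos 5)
Your argument is correct in its essentials, but it is not the paper's proof: it is precisely the classical rotated-vector-fields argument that the paper attributes to \cite[Theorem $2$, p. $387$]{Perko2001} in Section 2 and then deliberately sets aside in favor of a new one. The paper's own proof of Theorem~\ref{T2} runs as follows: apply the Weierstrass Preparation Theorem to the displacement map, writing $D(Y,x)=U(Y,x)P(Y,x)$ with $P(Y,x)$ a monic polynomial of degree $d$ in $x$ whose coefficients $a_i(Y)$ are analytic in $Y$; use the rotational perturbation (the same one you use, but only \emph{infinitesimally}) together with Perko's integral formula \cite[Lemma $2$]{Perko1992} to show $\partial a_0/\partial\lambda\neq 0$, hence that $Y\mapsto\Delta(P(Y))$ (discriminant) is a non-constant analytic function; invoke Whitney stratification to conclude that $\{Y\in N\colon\Delta(P(Y))=0\}$ has zero Lebesgue measure, so there exist $Y$ arbitrarily close to $X$ with all roots of $P(Y)$ simple; finally, since $d$ is even and simple real roots of a real polynomial of even degree come in even number, such $Y$ has either no periodic orbit or at least two hyperbolic limit cycles near $\gamma$. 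So the rotation appears in both proofs, but with different jobs: in yours it performs the actual bifurcation (via the semistable-cycle theorem), while in the paper's it only certifies non-degeneracy of the analytic set-up, and the splitting is obtained generically. What your route buys is economy and self-containedness: the explicit family $Y_\lambda=X+\lambda X^\perp$ manifestly stays in $\mathcal{X}_n$, and your verification of the rotated-family axioms and the sign analysis of $d_\lambda$ are correct. What the paper's route buys is the map $\Phi=\Delta\circ P$ of Remark~\ref{Remark1}: the identical construction applies when $d$ is odd, reducing Question~\ref{Q1} to the existence of $Y$ with $\Phi(Y)<0$ (Question~\ref{Q2}) --- whereas, as you yourself note at the end, rotation is structurally powerless in the odd case because the odd-order zero of the displacement map merely translates.

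One caveat, which affects both arguments equally: the concluding step ``the number of limit cycles near $\gamma$ changed, hence no topological equivalence exists'' is immediate only if the equivalence homeomorphism is required to be close to the identity (Andronov--Pontrjagin robustness); under a Peixoto-type definition, where $h$ is arbitrary, one must argue that global invariants (the total number of limit cycles, finite for polynomial fields, together with their stability types, e.g.\ semi-stability of $\gamma$) cannot be compensated by behaviour far from $\gamma$. You flag this issue explicitly, which is more than the paper's proof does --- it closes with the same local count --- so it should be regarded as a shared loose end rather than a defect of your proposal relative to the paper.
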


As commented before, Theorem~\ref{T2} follows as a corollary of the theory of rotated vector fields (see \cite[Section $4.6$]{Perko2001}). However, we have a different proof that may be extended to a general proof that also extends to the non-hyperbolic limit cycles of odd degree and thus providing a definitive answer to Question~\ref{Q1}. More precisely, let $X\in\mathcal{X}_n$ have a non-hyperbolic limit cycle $\gamma$, of whether even or odd degree. In our proof of Theorem~\ref{T2} we were able to prove that there is a neighborhood $N\subset\mathcal{X}_n$ of $X$, in relation to the coefficients topology, and a non-constant analytical map $\Phi\colon N\to\mathbb{R}$ satisfying the following statements.
\begin{enumerate}[label=(\alph*)]
	\item $0$ is not a regular value of $\Phi$;
	\item $X\in \Phi^{-1}(0)$;
	\item If $Y\in N$ has a non-hyperbolic limit cycle near $\gamma$, then $Y\in \Phi^{-1}(0)$;
	\item $\Phi^{-1}(0)$ has zero Lebesgue measure on $N$.
\end{enumerate}
With this map, the following statements hold (see Remark~\ref{Remark1}).
\begin{enumerate}[label=(\roman*)]
	\item If there is $Y\in N$ such that $\Phi(Y)\neq0$, then non-hyperbolic limit cycles of \emph{even} degree are not structurally stable;
	\item If there is $Y\in N$ such that $\Phi(Y)<0$, then non-hyperbolic limit cycles of \emph{odd} degree are not structurally stable.
\end{enumerate}
It is clear that statement $(i)$ follows directly from statement $(d)$ and thus we have a proof of Theorem~\ref{T2}. However, we were not able to prove that there is $Y\in N$ such that $\Phi(Y)<0$. Therefore, statement $(ii)$ remains an open sufficient condition for a positive answer for Question~\ref{Q1}. Since as far as we known this sufficient condition is new, we find it useful to state it in this paper.

\begin{question}\label{Q2}
	Let $\Phi\colon N\to\mathbb{R}$ be given as in this proof of Theorem~\ref{T2}. There is $Y\in N$ such that $\Phi(Y)<0$?
\end{question}

The paper is organized as follows. In Section~\ref{Sec3} we have some preliminaries results. Theorems~\ref{T1} and \ref{T2} are proved in Sections~\ref{Sec4} and \ref{Sec5}, respectively. 

\section{Preliminaries and technical results}\label{Sec3}

\subsection{Whitney's stratification}\label{Sub3.1}

Let $Z\subset\mathbb{R}^n$ be a closed set. An analytical \emph{stratification} of $Z$ is a filtration of $Z$ by closed sets
\[Z=Z_d\supset Z_{d-1}\supset\dots\supset Z_1\supset Z_0,\]
such that $Z_i\backslash Z_{i-1}$ is either empty or an analytical manifold of dimension $i$. Each connected component of $Z_i\backslash Z_{i-1}$ is called a \emph{stratum} of dimension $i$. Thus, $Z$ is the disjoint union of the strata. An analytical \emph{Whitney stratification} is, among other things, a locally finite analytical stratification. That is, given $p\in Z$ there is a neighborhood $U\subset\mathbb{R}^n$ of $p$ such that at most a finite number of strata intersects $U$. A set $Z\subset\mathbb{R}^n$ is \emph{analytic} if there are a finite number of analytical functions $f_1,\dots,f_k\colon\mathbb{R}^n\to\mathbb{R}$, such that
\[Z=\{x\in\mathbb{R}^n\colon f_1(x)=\dots=f_k(x)=0\}.\]

\begin{theorem}[Theorem~$1.2.10$ of \cite{Trot}]\label{T5}
	Every analytic subset of $\mathbb{R}^n$ admits an analytical Whitney stratification.
\end{theorem}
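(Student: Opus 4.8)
The plan is to prove the theorem by induction on the dimension $d$ of the analytic set $Z$, building the stratification from the top dimension downward and then refining it so that Whitney's regularity conditions (a) and (b) hold between every pair of adjacent strata. The starting point is the basic dimension theory of analytic sets: if $Z$ is analytic of dimension $d$, then its singular locus $\mathrm{Sing}(Z)$ --- the set of points near which $Z$ fails to be a smooth analytic manifold of dimension $d$ --- is again an analytic set, with $\dim\mathrm{Sing}(Z)<d$. This yields a first, crude filtration $Z=Z_d\supset\mathrm{Sing}(Z)\supset\dots$, in which $Z_d\setminus\mathrm{Sing}(Z)$ is a smooth $d$-dimensional analytic manifold whose connected components are taken as the top-dimensional strata, while $\mathrm{Sing}(Z)$ is handled recursively by the inductive hypothesis. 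Local finiteness and the frontier condition (the closure of each stratum being a union of strata) follow from the local finiteness of the defining analytic equations and from the fact that the filtration is built out of closed analytic subsets.

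The heart of the argument is to enforce Whitney's conditions, which are the ``other things'' implicit in the definition. Recall that for a pair of strata $X,Y$ with $X\subset\overline{Y}\setminus Y$ and a point $x\in X$, condition (a) requires that whenever $y_i\in Y\to x$ with tangent planes $T_{y_i}Y\to\tau$ one has $T_xX\subset\tau$, and condition (b) requires in addition that every limiting secant direction $\overline{xy_i}\to\ell$ satisfies $\ell\subset\tau$. The crude filtration need not satisfy these. The key input is \emph{Whitney's lemma}: for analytic (more precisely, semianalytic) strata, the ``bad set'' $B\subset X$ of points where condition (b) fails is a nowhere-dense semianalytic subset of $X$, hence of dimension strictly smaller than $\dim X$. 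Granting this, I would refine the stratification by deleting $\overline{B}$ from $X$ and absorbing $\overline{B}\cap X$ into the lower-dimensional part of the filtration, re-stratifying it recursively. Since each refinement strictly lowers the dimension of the offending stratum, the process terminates, and since condition (b) implies condition (a), the resulting filtration is a Whitney stratification.

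The main obstacle is the proof of Whitney's lemma, that is, that the failure set $B$ is genuinely small. This is where the analytic structure is essential and where the argument is delicate. The standard route is via the \emph{curve selection lemma} of \L{}ojasiewicz: were $B$ not nowhere dense, one could select a real-analytic arc inside the closure of the bad locus, analyze along it the limiting behaviour of tangent planes and secant lines, and derive a contradiction with the real-analyticity of $Z$, typically through a \L{}ojasiewicz-type inequality relating distances to the strata with the defining functions. A second, more structural, subtlety is that the bad sets and the various limit-loci arising in the construction are in general only \emph{semianalytic} rather than analytic; one must therefore carry out the entire induction in the semianalytic (or subanalytic) category, exploiting its closure properties under the operations involved --- finite unions, intersections, complements, and, for subanalytic sets, proper projection --- and only at the end observe that the individual strata produced are smooth analytic manifolds. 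Establishing these closure properties and the attendant dimension bounds, together with the curve-selection analysis, constitutes the technical core that I expect to be the principal difficulty; the inductive descent on dimension itself is then routine.
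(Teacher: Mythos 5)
First, a point of comparison: the paper does not prove this statement at all --- it is imported verbatim as Theorem~$1.2.10$ of Trotman's survey \cite{Trot}, and the paper only uses its consequence (that near any point the zero set of a non-constant analytic function is a finite disjoint union of analytic manifolds of positive codimension, hence Lebesgue-null there). So there is no proof in the paper to measure your argument against; what you have written is an outline of the proof that lives in the cited literature (Whitney, \L{}ojasiewicz), not a reconstruction of anything this paper does.

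On its own merits, your outline follows the classical route but has two genuine gaps. The opening step is wrong as stated in the real-analytic category: for a real analytic set $Z$, the singular locus $\mathrm{Sing}(Z)$ need \emph{not} be an analytic set (real analytic sets fail to be coherent in general --- Cartan- and Whitney-umbrella type examples), it is only semianalytic; so the ``crude filtration by closed analytic subsets'' does not exist in the form you assert, and local finiteness and the frontier condition do not simply ``follow from the defining analytic equations.'' You partially repair this yourself in the last paragraph by proposing to work semianalytically throughout, but then the induction must be formulated from the start for semianalytic sets (proving the stronger statement that every semianalytic set admits a Whitney stratification by analytic manifolds), rather than patched in at the end. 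Second, and more fundamentally, the entire weight of the theorem rests on Whitney's lemma --- that the locus where condition (b) fails is semianalytic of strictly smaller dimension --- together with the closure and dimension properties of the semianalytic category; you correctly identify these as ``the technical core'' and then defer them. That is a reasonable thing to do in a sketch, but it means what you have is a plan for the proof, essentially the one in the reference the paper cites, rather than a proof.
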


Let $f\colon\mathbb{R}^n\to\mathbb{R}$ be an analytical non-constant function. If $0\in\mathbb{R}$ is a regular value of $f$, then it follows from the Implicit Function Theorem that $f^{-1}(0)\subset\mathbb{R}^n$ is a analytical manifold of codimension $1$. Hence, Theorem~\ref{T5} is stating that if $0$ is not a regular value of $f$, then $f^{-1}(0)$ is yet endowed with some regularity. More precisely, in this case it follows from Theorem~\ref{T5} that
\[f^{-1}(0)=B_1\cup B_2\cup\dots\cup B_n,\]
where the union is disjoint and $B_i$ is an analytical manifold of codimension $i$. Moreover, if we are interested in a particular point $p\in f^{-1}(0)$, then it follows from the locally finite property that we can restrict the domain of $f$ to a neighborhood of $p$ and thus assume that each $B_i$ has at most a finite number os connected components. In particular, we conclude that $f^{-1}(0)$ has zero Lebesgue measure on that neighborhood. For more details in stratification theory, we refer to \cite{Trot}.

\subsection{Bernstein Polynomials of two variables}\label{Sub3.2}

Let $F\colon [0,1]^2\to\mathbb{R}$ be a map of class $C^r$, $r\geqslant0$. The \emph{Bernstein polynomial} associated to $F$ is given by
\begin{equation}\label{18}
	B_{m,n}^F(x,y)=\sum_{r=0}^{m}\sum_{s=0}^{n}F\left(\frac{r}{m},\frac{s}{n}\right)\binom{m}{r}\binom{n}{s}x^ry^s(1-x)^{m-r}(1-y)^{n-s},
\end{equation}
where $\binom{n}{k}=\frac{n!}{k!(n-k)!}$ is the usual \emph{binomial coefficient}. An important property of the Bernstein polynomials is that $B_{n,m}\to F$ uniformly in the $C^r$ topology. More precisely, we have the following theorem. 

\begin{theorem}[Kingsley, \cite{King1949}]
	If $F\colon [0,1]^2\to\mathbb{R}$ is of class $C^r$, $r\geqslant0$ finite, then
		\[\lim\limits_{(n,m)\to\infty}\frac{\partial^{|k|}B_{m,n}^F}{\partial x^{k_1}\partial y^{k_2}}(x,y)=\frac{\partial^{|k|}F}{\partial x^{k_1}\partial y^{k_2}}(x,y),\]
	uniformly in $(x,y)$, where $k=(k_1,k_2)\in\mathbb{Z}_{\geqslant0}^2$, $|k|=k_1+k_2$ and $|k|\leqslant r$. 
\end{theorem}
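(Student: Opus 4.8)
The plan is to reduce the statement to the classical $C^0$ (uniform) Bernstein approximation theorem on $[0,1]^2$ by expressing the mixed partial derivatives of $B_{m,n}^F$ through mixed finite differences of $F$. The starting point is the tensor-product structure of \eqref{18}: as a function of $x$ and $y$ the Bernstein polynomial separates into the univariate Bernstein basis in each variable, so the operator $\partial^{k_1}_x\partial^{k_2}_y$ acts on the $x$-factor and the $y$-factor independently.

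First I would record the univariate differentiation formula. Writing $\Delta_h g(t)=g(t+h)-g(t)$ and letting $\Delta_h^{j}$ denote its $j$-fold iterate, one has the classical identity
\[\frac{d^{j}}{dx^{j}}\sum_{k=0}^{n}g\!\left(\tfrac{k}{n}\right)\binom{n}{k}x^k(1-x)^{n-k}=\frac{n!}{(n-j)!}\sum_{k=0}^{n-j}\Delta_{1/n}^{j}g\!\left(\tfrac{k}{n}\right)\binom{n-j}{k}x^k(1-x)^{n-j-k},\]
proved by induction on $j$ using the derivative of a single Bernstein basis element and a summation by parts. Applying this first in the $x$-variable and then in the $y$-variable to \eqref{18} gives
\[\frac{\partial^{|k|}B_{m,n}^F}{\partial x^{k_1}\partial y^{k_2}}(x,y)=\frac{m!}{(m-k_1)!}\frac{n!}{(n-k_2)!}\sum_{r=0}^{m-k_1}\sum_{s=0}^{n-k_2}\left(\Delta_{1/m,x}^{k_1}\Delta_{1/n,y}^{k_2}F\right)\!\left(\tfrac{r}{m},\tfrac{s}{n}\right)\binom{m-k_1}{r}\binom{n-k_2}{s}x^r y^s(1-x)^{m-k_1-r}(1-y)^{n-k_2-s},\]
where $\Delta_{h,x}$ and $\Delta_{\ell,y}$ denote forward differences in the first and second argument.

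The next step is to trade the scaled mixed difference for a derivative. Since $F\in C^r$ and $|k|\leqslant r$, iterating the one-dimensional finite-difference mean value theorem yields
\[m^{k_1}n^{k_2}\left(\Delta_{1/m,x}^{k_1}\Delta_{1/n,y}^{k_2}F\right)\!\left(\tfrac{r}{m},\tfrac{s}{n}\right)=\frac{\partial^{|k|}F}{\partial x^{k_1}\partial y^{k_2}}(\xi_{r,m},\eta_{s,n}),\]
for some $\xi_{r,m}\in[\tfrac{r}{m},\tfrac{r+k_1}{m}]$ and $\eta_{s,n}\in[\tfrac{s}{n},\tfrac{s+k_2}{n}]$. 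As $\frac{m!}{(m-k_1)!\,m^{k_1}}\to1$ and $\frac{n!}{(n-k_2)!\,n^{k_2}}\to1$, the previous display shows that $\frac{\partial^{|k|}B_{m,n}^F}{\partial x^{k_1}\partial y^{k_2}}$ equals, up to factors tending to $1$, a degree $(m-k_1,n-k_2)$ Bernstein sum for the continuous function $G:=\frac{\partial^{|k|}F}{\partial x^{k_1}\partial y^{k_2}}$, except that $G$ is sampled at the shifted nodes $(\xi_{r,m},\eta_{s,n})$ instead of at the exact nodes $(\tfrac{r}{m-k_1},\tfrac{s}{n-k_2})$.

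The main obstacle, and the only genuinely analytic ingredient, is the uniform control of the resulting error. Here I would invoke the base case $r=0$, namely the classical bivariate Weierstrass--Bernstein theorem that $B_{m,n}^{G}\to G$ uniformly on $[0,1]^2$; it is proved by splitting the Bernstein sum according to whether the node is within $\delta$ of $(x,y)$, bounding the near part by the modulus of continuity of $G$ and the far part by the variance estimate $\sum_{r}\big(\tfrac{r}{m}-x\big)^2\binom{m}{r}x^r(1-x)^{m-r}=\tfrac{x(1-x)}{m}\leqslant\tfrac{1}{4m}$. The same $\delta$-splitting, now combined with the uniform continuity of $G$ on $[0,1]^2$ and the bounds $|\xi_{r,m}-\tfrac{r}{m-k_1}|=O(1/m)$ and $|\eta_{s,n}-\tfrac{s}{n-k_2}|=O(1/n)$, shows that passing from the exact nodes to the shifted nodes changes the sum by $o(1)$ uniformly in $(x,y)$. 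Collecting the three vanishing contributions --- the prefactors tending to $1$, the node shift, and the Bernstein approximation error of $G$ --- establishes the claimed uniform convergence for every multi-index with $|k|\leqslant r$.
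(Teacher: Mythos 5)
The paper does not prove this statement at all: it is quoted as an external result, cited to Kingsley \cite{King1949}, so there is no internal proof to compare against. Your argument is correct and is essentially the classical one (indeed the route Kingsley himself takes, following Bernstein's univariate argument): the finite-difference formula for derivatives of Bernstein polynomials, the mean value theorem to replace scaled mixed differences by values of $\partial^{|k|}F/\partial x^{k_1}\partial y^{k_2}$ at nearby points, and reduction to the $C^0$ Weierstrass--Bernstein theorem plus a node-shift estimate controlled by the modulus of continuity. One cosmetic remark: in the iterated mean value theorem the point $\eta$ depends on $r$ as well as on $s$ and $n$ (it is produced after $\xi_{r,m}$ has been fixed), so it should be written $\eta_{r,s,n}$; this changes nothing, since your error bound only uses that $\xi_{r,m}$ and $\eta_{r,s,n}$ lie within $O(1/m)$ and $O(1/n)$ of the respective exact nodes, uniformly in $r$ and $s$.
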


In particular, we have the following corollary.

\begin{corollary}\label{P3}
	Let $F\colon\mathbb{R}^2\to\mathbb{R}$ be a function of class $C^r$, $r\geqslant0$ finite, and $B\subset\mathbb{R}^2$ a closed ball centered at the origin. Then for every $n\in\mathbb{N}$, there is a polynomial $R_n\colon\mathbb{R}^2\to\mathbb{R}$ such that
		\[\left|\frac{\partial^{|k|}F}{\partial x^i\partial y^j}(x,y)-\frac{\partial^{|k|}R_n}{\partial x^i\partial y^j}(x,y)\right|<\frac{1}{n},\]
	for all $(x,y)\in B$, $k=(i,j)\in\mathbb{Z}_{\geqslant0}^2$, $|k|=i+j$ and $|k|\leqslant r$. 
\end{corollary}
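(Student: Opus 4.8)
The plan is to reduce the statement to Kingsley's theorem by an affine change of variables that carries the unit square $[0,1]^2$ onto a square containing the ball $B$, and then to track how partial derivatives transform under this (affine, hence derivative-friendly) change of coordinates.

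First I would let $\rho>0$ be the radius of $B$, so that $B\subset[-\rho,\rho]^2$, and introduce the affine diffeomorphism $\phi\colon[0,1]^2\to[-\rho,\rho]^2$ given by $\phi(u,v)=(2\rho u-\rho,\,2\rho v-\rho)$, whose inverse $\psi=\phi^{-1}$ is $\psi(x,y)=\big(\tfrac{x+\rho}{2\rho},\,\tfrac{y+\rho}{2\rho}\big)$. Setting $G=F\circ\phi$, the map $G\colon[0,1]^2\to\mathbb{R}$ is of class $C^r$, so Kingsley's theorem applies: the Bernstein polynomials $B_{m,m}^G$ converge to $G$, together with all partial derivatives of order at most $r$, uniformly on $[0,1]^2$.

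Next I would define the candidate polynomial $R_n=B_{m,m}^G\circ\psi$ for a value of $m$ to be fixed at the end. Since $\psi$ is affine and $B_{m,m}^G$ is a polynomial, $R_n$ is a genuine polynomial on $\mathbb{R}^2$. Because $\psi$ decouples the variables, with $u$ depending only on $x$ and $v$ only on $y$, each of them scaled by $\tfrac{1}{2\rho}$, the chain rule yields, for every multi-index $k=(i,j)$ with $|k|=i+j$ and every $(x,y)\in[-\rho,\rho]^2$,
\[\frac{\partial^{|k|}F}{\partial x^i\partial y^j}(x,y)=\frac{1}{(2\rho)^{|k|}}\frac{\partial^{|k|}G}{\partial u^i\partial v^j}(\psi(x,y)),\qquad \frac{\partial^{|k|}R_n}{\partial x^i\partial y^j}(x,y)=\frac{1}{(2\rho)^{|k|}}\frac{\partial^{|k|}B_{m,m}^G}{\partial u^i\partial v^j}(\psi(x,y)).\]
Subtracting these identities gives
\[\left|\frac{\partial^{|k|}F}{\partial x^i\partial y^j}(x,y)-\frac{\partial^{|k|}R_n}{\partial x^i\partial y^j}(x,y)\right|=\frac{1}{(2\rho)^{|k|}}\left|\frac{\partial^{|k|}G}{\partial u^i\partial v^j}(\psi(x,y))-\frac{\partial^{|k|}B_{m,m}^G}{\partial u^i\partial v^j}(\psi(x,y))\right|.\]

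Finally, since $B\subset[-\rho,\rho]^2$ we have $\psi(B)\subset[0,1]^2$, so the right-hand factor is bounded by the uniform $C^r$ approximation error of $B_{m,m}^G$ on $[0,1]^2$. As only the finitely many multi-indices with $|k|\leqslant r$ are involved, the scaling factors $(2\rho)^{-|k|}$ are dominated by the single constant $C=\max_{0\leqslant|k|\leqslant r}(2\rho)^{-|k|}$, and Kingsley's theorem then lets me choose one value of $m$ for which this error is below $\tfrac{1}{Cn}$. This produces the desired bound $\big|\partial^{|k|}F/\partial x^i\partial y^j-\partial^{|k|}R_n/\partial x^i\partial y^j\big|<\tfrac1n$ on $B$, simultaneously for all $|k|\leqslant r$. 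The only point requiring care is the bookkeeping of the constants $(2\rho)^{-|k|}$ arising from the chain rule; this is harmless precisely because the set of relevant derivative orders is finite, so a single $m$ works for all of them at once.
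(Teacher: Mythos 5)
Your proof is correct, and it is precisely the rescaling argument that the paper leaves implicit when it presents the corollary as an immediate consequence of Kingsley's theorem: compose with an affine map taking $[0,1]^2$ onto a square containing $B$, apply Kingsley, and undo the change of variables, absorbing the finitely many chain-rule factors $(2\rho)^{-|k|}$ into a single constant. The only point of care, the choice of $m$ uniform over all $|k|\leqslant r$, is handled exactly as intended.
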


\subsection{Critical region of a odd limit cycle}\label{Sec3.3}

Given a compact set $B\subset\mathbb{R}^2$, let $\text{Int}(B)$ denote its topological interior. Given a continuous simple closed curve $S\subset\mathbb{R}^2$, we say that $S$ is \emph{piecewise smooth} if it is of class $C^1$ except, perhaps, in at most a finite number of points.

\begin{proposition}\label{P1}
	Let $X\in\mathfrak{X}^r$, $r\geqslant1$ finite, have a stable limit cycle $\gamma$ of odd degree and let $B\subset\mathbb{R}^2$ be a compact such that $\gamma\subset\text{Int}(B)$. Then there are two continuous and piecewise smooth simple closed curves $S_1$, $S_2\subset\text{Int}(B)$, with $S_1$ (resp. $S_2$) in the interior (resp. exterior) region of $B\backslash\gamma$, such that if $\Omega\subset\text{Int}(B)$ is the open region bounded by $S_1$ and $S_2$, then following statements hold.
	\begin{enumerate}[label=(\alph*)]
		\item $X$ is transversal to $S_1$ and $S_2$ and points towards $\Omega$;
		\item $\gamma\subset\Omega$;
		\item There is no singularity of $X$ in $\Omega$.
	\end{enumerate}
	Moreover, there is a neighborhood $N\subset\mathfrak{X}^r$ of $X$ such that every $Y\in N$ is transversal do $S_1$ and $S_2$ and points in the same direction as $X$.
\end{proposition}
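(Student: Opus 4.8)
The plan is to pass to a tubular neighbourhood of $\gamma$ and reduce the whole construction to a scalar comparison ODE on a cylinder. Since $\gamma$ is a periodic orbit of the polynomial (hence analytic) field $X$, it is an analytically embedded circle and admits a tubular neighbourhood with coordinates $(\theta,r)\in S^1\times(-\delta_0,\delta_0)$, where $\gamma=\{r=0\}$ and $r>0$ (resp. $r<0$) is the exterior (resp. interior) region of $B\setminus\gamma$. Writing $X=A(\theta,r)\,\partial_\theta+R(\theta,r)\,\partial_r$, we have $R(\theta,0)\equiv 0$ since $\gamma$ is invariant, and $A(\theta,0)\neq 0$ since $X$ does not vanish on the cycle; shrinking $\delta_0$ we may assume $A>0$ on the whole tube, so that $\theta$ is strictly increasing along every orbit. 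First I would reparametrise by $\theta$, so that orbits become graphs $r=r(\theta)$ solving $dr/d\theta=g(\theta,r):=R/A$ with $g(\theta,0)\equiv 0$, and let $P$ denote the Poincar\'e map of the section $\{\theta=0\}$. Stability of $\gamma$ together with its odd degree yields the two sign conditions $P(u)<u$ for $0<u$ small and $P(u)>u$ for $u<0$ small.

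In this normal form a graph $\{r=\rho(\theta)\}$ is a simple closed curve encircling $\gamma$, it is transversal to $X$ exactly when $\rho'(\theta)\neq g(\theta,\rho(\theta))$, and a short computation with the outward normal shows that $X$ points towards $\gamma$ (hence into $\Omega$) precisely when $\rho'>g$ on the exterior side and $\rho'<g$ on the interior side. The key device is to realise such $\rho$ as a genuine \emph{periodic} solution of a forced comparison ODE: I would let $\rho_+$ solve $\rho_+'=g(\theta,\rho_+)+\delta$ and $\rho_-$ solve $\rho_-'=g(\theta,\rho_-)-\delta$ for a small constant $\delta>0$. By construction these satisfy $\rho_+'>g(\theta,\rho_+)$ and $\rho_-'<g(\theta,\rho_-)$ with a \emph{uniform} margin $\delta$, so the associated curves are automatically transversal to $X$ and point towards $\gamma$. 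This is the heart of the matter, since any curve lying close to $\gamma$ is nearly tangent to the flow and strict transversality is not otherwise evident. It remains to make $\rho_\pm$ periodic. Let $P_{\pm\delta}$ be the time-$2\pi$ maps of the two forced ODEs; these depend continuously on $(\delta,u)$ and $P_0=P$. Evaluating $u\mapsto P_{+\delta}(u)-u$ at $u=0$ (where the forcing forces a value $>0$) and at a fixed $u_+>0$ with a definite contraction gap $P(u_+)<u_+$ (where, for $\delta$ small, the value is $<0$), the Intermediate Value Theorem produces a fixed point $u^\ast\in(0,u_+)$, i.e.\ a periodic $\rho_+\approx u^\ast>0$; symmetrically one gets a periodic $\rho_-\approx u^{\ast\ast}<0$. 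Setting $S_2=\{r=\rho_+\}$ and $S_1=\{r=\rho_-\}$ gives the two (smooth, a fortiori piecewise smooth) simple closed curves in the exterior and interior regions of $B\setminus\gamma$.

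I would then verify the remaining conclusions directly. Taking $\delta$ small makes the annulus $\Omega$ bounded by $S_1,S_2$ a thin neighbourhood of $\gamma$ contained in $\text{Int}(B)$; it contains $\gamma$ by construction, giving $(b)$, and it contains no singularity of $X$ since $A>0$ forces $X\neq 0$ on the whole tube, giving $(c)$; transversality with the flow pointing into $\Omega$ on both boundaries is exactly the margin-$\delta$ property, giving $(a)$. For the robustness clause, the function $X\cdot\nu$ (with $\nu$ the outward normal of $S_1\cup S_2$) is continuous and, by the uniform margin, bounded away from zero on the compact set $S_1\cup S_2$; putting $m=\min_{S_1\cup S_2}|X\cdot\nu|>0$, any $Y$ with $\sup_{S_1\cup S_2}\lVert Y-X\rVert<m$ satisfies $|Y\cdot\nu-X\cdot\nu|<m$, hence $Y\cdot\nu$ has the same sign as $X\cdot\nu$, so $Y$ is transversal to $S_1,S_2$ and points in the same direction as $X$. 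Since $S_1\cup S_2$ lies in a compact set, the topology $\tau_r$ with $r\geq 1$ provides exactly such a $C^0$ (indeed $C^1$) neighbourhood $N\subset\mathfrak{X}^r$, completing the proof. The main obstacle is precisely the strict transversality in this nearly-tangent regime, which the forced-ODE comparison overcomes by converting the stability sign conditions into a periodic strict super/sub-solution via the Intermediate Value Theorem.
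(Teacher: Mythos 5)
Your proof is correct, but it takes a genuinely different route from the paper's. The paper stays in the plane: it fixes a transversal section $\ell$, takes the rotated field $Z=X+\lambda_0X^{\perp}$, and builds $S_1$ as the union of an orbit arc of $Z$ from a point $\xi_1\in\ell$ to its return point $\xi_Z\in\ell$ with the segment $J_1\subset\ell$ joining $\xi_1$ and $\xi_Z$; transversality of $X$ to $S_1$ then splits into two separate facts --- orbits of a nontrivially rotated field cross orbits of $X$, and $\ell$ is a section --- while stability plus continuous dependence gives $\xi_1<\xi_Z<0$, so the two pieces close up into a simple closed curve. That construction has corners at $\xi_1$ and $\xi_Z$, which is precisely why the proposition is stated for \emph{piecewise} smooth curves. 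You instead pass to tubular coordinates, reduce everything to the scalar periodic equation $dr/d\theta=g(\theta,r)$, and manufacture the curves as genuinely periodic solutions of the forced equations $\rho'=g\pm\delta$ via an intermediate-value fixed-point argument on the forced return maps. Your route buys smooth (not merely piecewise smooth) curves and a uniform transversality margin $\delta$, which makes statement (a) and the final robustness clause (via $m=\min_{S_1\cup S_2}|X\cdot\nu|>0$ and $C^0$-control on a compact set, available in $\tau_r$) completely transparent; the paper's route avoids tubular coordinates and any fixed-point argument, closing the curve ``by hand'' with the section. Two details you leave implicit and should state: (i) the forced return maps $P_{\pm\delta}$ must be defined on all of $[0,u_+]$ (resp.\ $[u_-,0]$), i.e.\ solutions must remain in the tube up to $\theta=2\pi$, which holds for $u_+$ and $\delta$ small by continuous dependence; (ii) the periodic solution $\rho_+$ is strictly positive for \emph{all} $\theta$, not just at $\theta=0$ --- this follows either from scalar comparison with the unforced equation (whose solutions cannot cross the invariant line $r=0$), or from the observation that at any zero of $\rho_+$ one would have $\rho_+'=\delta>0$, so $\rho_+$ cannot cross $r=0$ downward; this is what keeps $S_2$ in the exterior region, and symmetrically for $S_1$.
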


\begin{proof} Let $\ell$ be small transversal section of $\gamma$, endowed with a metric $\xi$ such that $\xi=0$ at $\gamma$, $\xi<0$ in the interior component of $B\backslash\gamma$ and $\xi>0$ in the exterior component of $B\backslash\gamma$. Let $\pi\colon\ell\to\ell$ be the first return map associated to $\gamma$. Since $\gamma$ is stable, it follows that if $\xi<0$ (resp. $\xi>0$), then $\xi<\pi(\xi)<0$ (resp. $0<\pi(\xi)<\xi)$. See Figure~\ref{Fig1}.	
\begin{figure}[ht]
	\begin{center}
		\begin{overpic}[height=5cm]{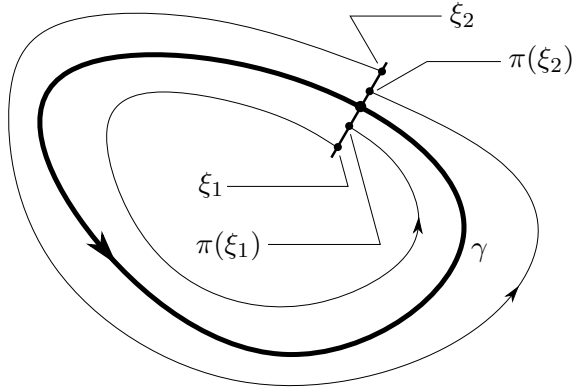} 
			\put(36,37){$\xi_1$}
			\put(35.5,25.5){$\pi(\xi_1)$}
			\put(83,69){$\xi_2$}
			\put(94,60.5){$\pi(\xi_2)$}
			\put(87,25){$\gamma$}
		\end{overpic}
	\end{center}
	\caption{Illustration of the first return map $\pi$, with $\xi_1<0$ and $\xi_2>0$.}\label{Fig1}
\end{figure}
Let $\xi_1<0$. It follows from the continuous dependence of initial conditions (see \cite[Theorem $8$, p. $25$]{And1971}) that there is a neighborhood $N_0\subset\mathfrak{X}^r$ of $X$ such that for every $Y\in N_0$, the orbit $\gamma_Y$ of $Y$ through $\xi_1$ will intersect $\ell$ in a point $\xi_Y$ such that $\xi_1<\xi_Y<0$ (observe that the existence of $\xi_Y$ does not depend on the bifurcations that may occur on $\gamma$). Let $X=(P,Q)$ and define $X^\perp=(-Q,P)$. Let also $X_\lambda=X+\lambda X^\perp$, $\lambda\in\mathbb{R}$, and observe that $X_\lambda\in N_0$ for $|\lambda|$ small enough. Observe that $X_\lambda$ is a small rotation of $X$. More precisely, if $\lambda>0$ (resp. $\lambda<0$) then $X_\lambda$ is a rotation of $X$ in the counterclockwise (resp. clockwise) direction. Let $|\lambda_0|>0$ be small enough such that $X_{\lambda_0}\in N_0$ and denote $Z=X_{\lambda_0}$. Let $\gamma_Z$ be the orbit of $Z$ from $\xi_1$ to $\xi_Z$ and let also $J_1\subset\ell$ be the segment between $\xi_1$ and $\xi_Z$. Let $S_1=\gamma_Z\cup J_1$ and observe that $S_1$ is a continuous simple closed curve. Moreover, observe that $S_1$ is $C^1$ except at $\xi_1$ and $\xi_Z$. Hence, $S_1$ is also piecewise smooth. Since $Z$ is a small rotation of $X$, it follows that $X$ is transversal to $\gamma_Z$. Moreover, it follows from the first return map $\pi\colon\ell\to\ell$ that $X$ is also transversal to $J_1$. Hence, $X$ is transversal to $S_1$. The fact that $X(s)$ points towards $\gamma$ for every $s\in S_1$ follows from an appropriated choice of the sign of $\lambda_0$ (depending on the orientation of $\gamma$). Similarly, one can take $\xi_2>0$ and construct the curve $S_2$ such that $X$ is transversal to $S_2$ and $X(s)$ points towards $\gamma$ for every $s\in S_2$. Hence, we have statement $(a)$. Statement $(b)$ follows by taking a small enough neighborhood $N\subset\mathfrak{X}^r$ of $X$. Statement $(c)$ follows directly from the definition of $\Omega$. The last statement follows by continuity and from the fact that $S_1$ and $S_2$ are compact. \end{proof}

\section{Proof of Theorem~\ref{T1}}\label{Sec4}

Let $\gamma(t)$ be the parametrization of $\gamma$, given by the flow of $X$, and let $T>0$ be its period. Reversing the time variable if necessary, we can assume that $\gamma$ is stable. It follows from \cite[p. 124]{And1971} that there is a neighborhood $G\subset\mathbb{R}^2$ of $\gamma$ and a function $F\colon G\to\mathbb{R}$ of class $C^\infty$ such that
\begin{equation}\label{4}
	F(\gamma(t))=0, \quad \frac{\partial F}{\partial x}(\gamma(t))^2+\frac{\partial F}{\partial y}(\gamma(t))^2>0,
\end{equation}
for all $t\in[0,T]$. Using bump-functions we can assume that $F$ is defined on the entire plane. Let $B\subset\mathbb{R}$ be a closed ball such that $\gamma\subset\text{Int}(B)$. Given $r\geqslant1$ finite, it follows from Corollary~\ref{P3} that for each $n\in\mathbb{N}$ there is a polynomial $R_n\colon\mathbb{R}^2\to\mathbb{R}$ such that
	\[\left|\frac{\partial^{|k|}F}{\partial x^i\partial y^j}(x,y)-\frac{\partial^{|k|}R_n}{\partial x^i\partial y^j}(x,y)\right|<\frac{1}{n},\]
for all $(x,y)\in B$, $k=(i,j)\in\mathbb{Z}_{\geqslant0}^2$, $|k|=i+j$ and $|k|\leqslant r+1$. Let $X_{\lambda,n}=(P_{\lambda,n},Q_{\lambda,n})$ be the two-parameter family of planar polynomial vector field given by,
	\[P_{\lambda,n}(x,y)=P(x,y)+\lambda R_n(x,y)\frac{\partial R_n}{\partial x}(x,y), \quad Q_{\lambda,n}(x,y)=Q(x,y)+\lambda R_n(x,y)\frac{\partial R_n}{\partial y}(x,y),\]
for $\lambda>0$ small. Since $R_n\to F$ in the $C^{r+1}$-topology, restricted at $B$, as $n\to\infty$, we also define $X_{\lambda,\infty}=(P_{\lambda,\infty},Q_{\lambda,\infty})$ as
\begin{equation}\label{21}
	P_{\lambda,\infty}=P(x,y)+\lambda F(x,y)\frac{\partial F}{\partial x}(x,y), \quad Q_{\lambda,\infty}=Q(x,y)+\lambda F(x,y)\frac{\partial F}{\partial y}(x,y).
\end{equation}
Since $R_n\to F$ in the $C^{r+1}$-topology, restricted at $B$, it is clear that $X_{\lambda,n}\to X$ in $\mathfrak{X}^r$. More precisely, given any neighborhood $N\subset\mathfrak{X}^r$ of $X$, there is $\lambda_0>0$ and $n_0\in\mathbb{N}$ such that $X_{n,\lambda}\in N$, for any $0<\lambda<\lambda_0$ and $n\geqslant n_0$. Therefore, since $\gamma$ is not semi-stable, it follows that the perturbation $\gamma_{\lambda,n}$ of $\gamma$ is well defined and still a periodic orbit, for $\lambda>0$ small enough and $n\in\mathbb{N}$ big enough. Observe that,
\begin{equation}\label{6}
	\int_{\gamma_{\lambda,n}}\frac{\partial P_{\lambda,n}}{\partial x}+\frac{\partial Q_{\lambda,n}}{\partial y}=\int_{\gamma_{\lambda,n}}\frac{\partial P}{\partial x}+\frac{\partial Q}{\partial y}+\lambda\int_{\gamma_{\lambda,n}}\left(\frac{\partial R_n}{\partial x}\right)^2+\left(\frac{\partial R_n}{\partial y}\right)^2+\lambda\int_{\gamma_{\lambda,n}} R_n\frac{\partial^2R_n}{\partial x^2}+R_n\frac{\partial^2 R_n}{\partial y^2}.
\end{equation}
In what follows we fix $\lambda>0$. We claim that,
	\[\lim\limits_{n\to\infty}\int_{\gamma_{\lambda,n}}\frac{\partial P}{\partial x}+\frac{\partial Q}{\partial y}=\int_{\gamma}\frac{\partial P}{\partial x}+\frac{\partial Q}{\partial y}=0.\]
Indeed, let $T_{\lambda,n}$ be the period of $\gamma_{\lambda,n}$. Since $F(\gamma)=0$, it follows that $\gamma$ is also a periodic orbit of $X_{\lambda,\infty}$ and thus $\gamma_{\lambda,n}\to\gamma$ uniformly and $T_{\lambda,n}\to T$, as $n\to\infty$. Hence,
	\[\begin{array}{rl}
		\displaystyle \lim\limits_{n\to\infty}\int_{\gamma_{\lambda,n}}\frac{\partial P}{\partial x}+\frac{\partial Q}{\partial y} &\displaystyle= \lim\limits_{n\to\infty}\int_{0}^{T_{\lambda,n}}\left(\frac{\partial P}{\partial x}+\frac{\partial Q}{\partial x}\right)(\gamma_{\lambda,n}(t))\;dt \vspace{0.2cm} \\
		&\displaystyle= \lim\limits_{n\to\infty}\int_{0}^{T}\left(\frac{\partial P}{\partial x}+\frac{\partial Q}{\partial x}\right)(\gamma_{\lambda,n}(t))\;dt+\lim\limits_{n\to\infty}\int_{T}^{T_{\lambda,n}}\left(\frac{\partial P}{\partial x}+\frac{\partial Q}{\partial x}\right)(\gamma_{\lambda,n}(t))\;dt\vspace{0.2cm} \\
		&\displaystyle=\int_{0}^{T}\left(\frac{\partial P}{\partial x}+\frac{\partial Q}{\partial x}\right)(\gamma(t))\;dt=\int_{\gamma}\frac{\partial P}{\partial x}+\frac{\partial Q}{\partial x}=0,
	\end{array}\]
with the last equality following from the fact that $\gamma$ is a non-hyperbolic limit cycle of $X$ and the equality before that following from the uniform convergence $\gamma_{\lambda,n}\to\gamma$. Similarly, since $R_n\to F$, $\frac{\partial^2R_n}{\partial x^2}\to\frac{\partial^2F}{\partial x^2}$ and $\frac{\partial^2R_n}{\partial y^2}\to\frac{\partial^2F}{\partial y^2}$ uniformly in $B$ as $n\to\infty$, it follows that
	\[\lim\limits_{n\to\infty}\int_{\gamma_{\lambda,n}} R_n\frac{\partial^2R_n}{\partial x^2}+R_n\frac{\partial^2 R_n}{\partial y^2}=\int_{\gamma}F\frac{\partial^2 F}{\partial x^2}+F\frac{\partial^2 F}{\partial y^2}=0,\]
with the last equality following from $F(\gamma)=0$. Moreover, it also follows from \eqref{4} that,
	\[\lim\limits_{n\to\infty}\int_{\gamma_{\lambda,n}}\left(\frac{\partial R_n}{\partial x}\right)^2+\left(\frac{\partial R_n}{\partial y}\right)^2=\int_{\gamma}\left(\frac{\partial F}{\partial x}\right)^2+\left(\frac{\partial F}{\partial y}\right)^2>0.\]
Therefore, given $\lambda>0$ small, it follows from \eqref{6} that
\begin{equation}\label{7}
	\int_{\gamma_{\lambda,n}}\frac{\partial P_{\lambda,n}}{\partial x}+\frac{\partial Q_{\lambda,n}}{\partial y}>0,
\end{equation}
for $n\in\mathbb{N}$ big enough. Hence, we conclude that for every $\lambda>0$ there is $n_\lambda\in\mathbb{N}$ such that if $n\geqslant n_\lambda$, then $\gamma_{\lambda,n}$ is a unstable hyperbolic limit cycle of $X_{\lambda,n}$. We claim that $X_{\lambda,n}$ has at least two others limit cycles near $\gamma_{\lambda,n}$. Indeed, let $N\subset\mathfrak{X}^r$ and $\Omega\subset\text{Int}(B)$ be given by Proposition~\ref{P1}. Observe $\Omega$ is invariant by the flow of every $Y\in N$. Let $\lambda>0$ be small enough and $n\geqslant n_\lambda$ be big enough such that $X_{\lambda,n}\in N$. Since $\gamma_{\lambda,n}\subset\Omega$ is a unstable limit cycle, it follows from the Poincare-Bendixson Theorem that $X_{\lambda,n}$ have at least two others limit cycles in $\Omega$, one in each connected component of $\Omega\backslash\gamma_{\lambda,n}$. The proof now follows from the fact that $N\subset\mathfrak{X}^r$ can be taken arbitrarily small. {\hfill$\square$}

\section{Proof of Theorem~\ref{T2}}\label{Sec5}

\begin{proof} Let $\gamma$ be a non-hyperbolic limit cycle for $X\in\mathcal{X}_n$. Let also $N\subset\mathfrak{X}_n$ be a small enough neighborhood of $X$ such that the displacement map $D\colon N\times\ell\to\mathbb{R}$ is well defined, where $\ell$ is a transversal section of $\gamma$. Let $d$ be the degree of $\gamma$. That is, $d\geqslant2$ is the first integer such that,
	\[\frac{\partial^d D}{\partial x^d}(X,0)\neq0.\]
It follows from the Weierstrass Preparation Theorem (see \cite[Chapter $4$]{GolGui1973}) that $D(Y,x)=U(Y,x)P(Y,x)$, where $U$ is a strictly positive analytical function and
	\[P(Y,x)=x^d+a_{d-1}(Y)x^{d-1}+\dots+a_1(Y)x+a_0(Y),\]
where $a_i$ is analytical and $a_i(X)=0$, $i\in\{0,\dots,d-1\}$. Given $\varepsilon>0$, let $X_\lambda$ be the $1$-parameter family given by
\begin{equation}\label{1}
	P_\lambda(x,y)=P(x,y)-\lambda\varepsilon Q(x,y) \quad Q_\lambda(x,y)=Q(x,y)+\lambda\varepsilon P(x,y),
\end{equation}
with $\lambda\in(-1,1)$. Observe that if $\varepsilon>0$ is small enough, then $X_\lambda\in N$, for all $\lambda\in(-1,1)$. Observe also that $X_0=X$. Let $\gamma(t)$ be a parametrization of $\gamma$ such that $\gamma(0)=p$, where $\{p\}=\ell\cap\gamma$. Let $T>0$ be the period of $\gamma$. It follows from \cite[Lemma $2$]{Perko1992} that,
\begin{equation}\label{5}
	\frac{\partial D}{\partial\lambda}(X_0,0)=C\int_{0}^{T}\left(e^{-\int_{0}^{t}Div(\gamma(s))\;ds}\right)X_0(\gamma(t))\land\dfrac{\partial X_0}{\partial \lambda}(\gamma(t))\;dt,
\end{equation}
with $C\in\mathbb{R}\backslash\{0\}$ and $(x_1,x_2)\land(y_1,y_2)=x_1y_2-x_2y_1$. Observe that, 
	\[a_0(Y)=\frac{D(Y,0)}{U(Y,0)},\]
for any $Y\in N$. To simplify the notation, let $V(Y,x)=U(Y,x)^{-1}$. Hence,
	\[\frac{\partial a_0}{\partial\lambda}(X_0)=\frac{\partial V}{\partial\lambda}(X_0,0)\underbrace{D(X_0,0)}_{0}+V(X_0,0)\frac{\partial D}{\partial\lambda}(X_0,0).\]
Thus, it follows from \eqref{5} that,
\begin{equation}\label{3}
	\frac{\partial a_0}{\partial\lambda}(X_0)=V(X_0,0)C\int_{0}^{T}\left(e^{-\int_{0}^{t}Div(\gamma(s))\;ds}\right)X_0(\gamma(t))\land\dfrac{\partial X_0}{\partial \lambda}(\gamma(t))\;dt.
\end{equation}
It follows from \eqref{1} that,
	\[X_0(x,y)\land\dfrac{\partial X_0}{\partial \lambda}(x,y)=\varepsilon\bigl(P(x,y)^2+Q(x,y)^2\bigr).\]
Hence
	\[X_0(\gamma(t))\land\dfrac{\partial X_0}{\partial \lambda}(\gamma(t))>0,\]
for all $t\in[0,T]$. Thus, it follows from \eqref{3} that,
	\[\frac{\partial a_0}{\partial\lambda}(X)\neq0.\]
Therefore, the function $a_0\colon N\to\mathbb{R}$ has a non-zero directional derivative and thus it is not constant. In particular, the polynomial
	\[P(Y,x)=x^d+a_{d-1}(Y)x^{d-1}+\dots+a_1(Y)x+a_0(Y),\]
is not constant in $Y$. From now on, we denote $P(Y)=P(Y,\cdot)$. Let $\Delta\colon\mathbb{R}^{d+1}\to\mathbb{R}$ denote the discriminant of polynomials of degree $d$ (see \cite[Chapter $12$]{GelKapZel1994}). Observe that if $\gamma(Y)$ is a non-hyperbolic limit cycle of $Y$, then $\Delta(P(Y))=0$. Since $\Delta\circ P$ is non-constant, it follows from the stratification theory (recall Section~\ref{Sub3.1}) that the set 
	\[\Omega=\{Y\in N\colon \Delta(P(Y))=0\},\]
is given by the disjoint union of analytical manifolds of codimension at least one, each of them having at most a finite number of connected components. In particular, $\Omega$ has zero Lebesgue measure on $N$ and thus we can take $Y$ arbitrarily close to $X$ such that $\Delta(P(Y))\neq0$. Therefore, if $d$ is even, then $P(Y)$ has either zero or at least two real roots. In the former, $Y$ has no limit cycles near $\gamma$. In the latter, $Y$ has at least two hyperbolic limit cycles near $\gamma$. In either case, we have proved that $\gamma$ is structurally unstable. \end{proof}

\begin{remark}\label{Remark1}
	In the context of the proof of Theorem~\ref{T2}, if $d$ is odd, then it follows from \cite[Section $4$]{NickDye} that the following statements hold.
	\begin{enumerate}[label=(\roman*)]
		\item If $\Delta(P(Y))>0$, then the number of real roots of $P$ is congruent to $1$ modulo $4$;
		\item If $\Delta(P(Y))<0$, then the number of real roots of $P$ is congruent to $3$ modulo $4$.
	\end{enumerate}
	Therefore, if there is $Y\in N$ such that $\Delta(P(Y))<0$, then $Y$ has at least three hyperbolic limit cycles near $\gamma$ and thus we conclude that non-hyperbolic limit cycles of odd degree are also structurally unstable in $\mathcal{X}_n$. Hence, by taking $\Phi=\Delta\circ P$ we obtain Question~\ref{Q2}.
\end{remark}

\section*{Acknowledgments}

We thank to the reviewers their comments and suggestions which help us to improve the presentation of this paper. The author is supported by S\~ao Paulo Research Foundation (FAPESP), grants 2019/10269-3, 2021/01799-9 and 2022/14353-1.

\section*{Declarations}

\noindent\textbf{Competing interests:} On behalf of all authors, the corresponding author states that there is no conflict of interest.

\end{document}